\newtheorem{thm}{Theorem}[section]
\newtheorem{lem}[thm]{Lemma}
\newtheorem{prop}[thm]{Proposition}
\theoremstyle{definition}
\newtheorem{defn}[thm]{Definition}
\newtheorem*{defn*}{Definition}
\newtheorem*{rems*}{Remarks}
\newtheorem*{rem*}{Remark}
\numberwithin{equation}{section}
\begin{document}

\title [Generic singularities of
symplectic immersions] {Generic singularities of symplectic \\ and
quasi-symplectic immersions }
\author{W. Domitrz}
\address{Warsaw University of Technology\\
Faculty of Mathematics and Information Science\\
Plac Politechniki 1\\
00-661 Warsaw\\
Poland }

\email{domitrz@mini.pw.edu.pl}

\author{S. Janeczko}
\address{Institute of Mathematics\\
Polish Academy of Sciences\\
Sniadeckich 8\\
P.O. Box 137\\
00-950 Warsaw\\
Poland\\
Warsaw University of Technology\\
Faculty of Mathematics and Information Science\\
Plac Politechniki 1\\
00-661 Warsaw\\
Poland  }

\email{janeczko@alpha.mini.pw.edu.pl}

\author{M. Zhitomirskii }
\address{Department of Mathematics\\
Technion\\
32000 Haifa\\
Israel }

\email{mzhi@techunix.technion.ac.il}
\thanks{The research of W. Domitrz and S. Janeczko was supported by Polish MNiSW grant \newline
\hskip 2cm no. N N201
397237. The research of M. Zhitomirskii was supported by the Israel Science Foundation grant 1383/07.}
 \subjclass{Primary 53D05. Secondary 58K05, 57R42, 58A10.}

\keywords{symplectic immersion, algebraic
restrictions, singularities} \maketitle

\begin{abstract} For any $k<2n$ we construct a complete system of invariants
in the problem of classifying singularities of immersed
$k$-dimensional submanifolds of a symplectic $2n$-manifold at a
generic double point.
\end{abstract}

\section{Introduction}.
\label{sec-intr}

\subsection{Symplectic and quasi-symplectic immersions}
\label{subsec-immersions}
A smooth $2r$-dimensional submanifold $S$ of a $2n$-dimensional  symplectic manifold $(M^{2n}, \omega )$ is called symplectic
if the restriction $\omega \vert _{TS}$ has the maximal
possible rank $2r$. If $dim S = 2r+1$ then the maximal possible rank of this  restriction
is also $2r$ and in this case $S$ is called quasi-symplectic.

\medskip

The Darboux-Givental theorem (see \cite{AG}) states that in the
problem of local classification of pairs
consisting of a symplectic form on $M^{2n}$  and a
smooth submanifold of $M^{2n}$, the pullback of the symplectic
form to the submanifold is a complete invariant. This theorem
implies that any two germs of smooth symplectic or quasi-symplectic submanifolds of the same dimension of a symplectic manifold
can be brought one to the other by a local diffeomorphism preserving the
symplectic form.

\medskip

The present work is devoted to the classification of first occurring singularities of
immersed symplectic or quasi-symplectic submanifolds of a symplectic manifold, i.e. classification of the
tuples
\begin{equation}
\label{tu} \left(\mathbb R^{2n}, \ \omega , \ S_1^k \cup S_2^k \right)_0
\end{equation}
where $\omega $ is a symplectic form on $\mathbb R^{2n}$ and  $S_1^k, S_2^k$ are $k$-dimensional symplectic or quasi-symplectic  submanifolds of $\left(\mathbb R^{2n}, \omega \right)$ whose intersection contains $0\in \mathbb R^{2n}$.
The notation $(\ \ )_0$ means that objects in the parenthesis are germs at $0\in \mathbb R^{2n}$.
A tuple (\ref{tu}) is equivalent to a tuple of the same form with $\widetilde \omega , \widetilde S_1^k, \widetilde S_2^k$ if there exists a local diffeomorphism of $\mathbb R^{2n}$ which brings $\widetilde \omega $ to $\omega $ and $S_1^k \cup S_2^k$ to $\widetilde S_1^k \cup \widetilde S_2^k$.
We work in a fixed category which is smooth or real-analytic.
We restrict ourselves to generic germs of (\ref{tu}) which means that our results
 concern a certain open and dense set in the space of such germs.

\subsection{The cases of dimension $1$ and codimension $1$} Note that any hypersurface and any 1-dimensional submanifold of a symplectic manifold are quasi-symplectic.
Within generic germs the cases $k=1$ and $k=2n-1$ are much simpler than the case $2\le k\le n-2$ and in these two
cases the classification of generic tuples (\ref{tu}) is contained in the work \cite{Ar1} by V. Arnol'd $(k=1)$ and
in the work \cite{Me} by R. B. Melrose $(k=2n-1)$. Theorems \ref{thm-dim-1} and \ref{thm-codim-1} below are the simplest
particular results of these works.

\begin{thm}
\label{thm-dim-1} Let $k=1$. All germs (\ref{tu}) with non-tangent strata $S_1^1, S_2^1$ and such that
the restriction of $\omega $ to $T_0S_1^1+T_0S_2^1$ has maximal rank $2$ are
equivalent.
\end{thm}

This theorem is the simplest case of the
symplectic classification of singular curves diffeomorphic to
$A_{\mu }=\{x\in\mathbb R^{2n}:x_1^{\mu+1}-x_2^2=x_{\ge 3}=0\}$
obtained by V. Arnol'd in \cite{Ar1}, namely the case $\mu=1$. All germs (\ref{tu}) with $k=1$, non-tangent
strata $S_1^1, S_2^1$ and such that $\omega $ annihilates the space $T_0S_1^1+T_0S_2^1$ are also
equivalent. In the case of the tangent strata, with a finite order of tangency, the classification
is more involved, but remains discrete.
These results from the work \cite{Ar1} are explained in \cite{DJZ2} using the method of algebraic restrictions,
developed in \cite{Zh1} for classification of singular varieties in a contact space and in \cite{DJZ2} for
classification of singular varieties in a symplectic space. The work \cite{DJZ2} contains symplectic
classification of singular curves with any fixed $A$ or $D$ or $E$ singularity.

\begin{thm}
\label{thm-codim-1}
Let $k=2n-1$. All germs (\ref{tu}) with transversal hypersurfaces $S_1^{2n-1}, S_2^{2n-1}$  such that
the restriction of $\omega $ to $T_0S_1^{2n-1}\cap T_0S_2^{2n-1}$ has maximal rank \ $2n-2$ are
equivalent.
\end{thm}

This theorem  was proved by R. B.  Melrose in \cite{Me}. See the proof of Proposition 2.1 in this work, where
Theorem \ref{thm-codim-1} is formulated in a different, but equivalent form.
The main part of the work \cite{Me} is devoted to a much more difficult case that $S_1^{2n-1}, S_2^{2n-1}$
are transversal, but the restriction of $\omega $ to the manifold $S_1^{2n-1}\cap S_2^{2n-1}$
has the first occurring singularity within closed 2-forms on an even-dimensional manifold, so called $\Sigma_{20}$ singularity studied by J. Martinet (see \cite{Ma} or Appendix G of \cite{Zh2}). In particular, the restriction of $\omega $ to
$T_0S_1^{2n-1}\cap T_0S_2^{2n-1}$ has rank $2n-4$. In this case, expressed in \cite{Me} is equivalent
terms of the Poisson bracket of
the functions $f_1, f_2$ defining $S_1^{2n-1}, S_2^{2n-1}$, these hypersurfaces are called glancing. Melrose proved that in the $C^\infty $ category all tuples (\ref{tu})
 with glancing hypersurfaces are equivalent. In analytic category it is not so as it was showed in
\cite{Os}.

\subsection{Moduli in the case $2\le k\le 2n-2$}
 \label{subsec-moduli} In this case already the classification of generic tuples (\ref{tu}) is a much harder problem. The only result we know concerns the case $k=2$ in our work \cite{DJZ2}, section 7.4.
In the present work we classify generic tuples (\ref{tu}) for any $k$ and $n$. Our theorems  on a complete system of invariants in section \ref{sec-main-results} imply the following statement.

\begin{thm}
\label{thm-moduli}
In the problem of classification of generic tuples (\ref{tu}) with $2\le k\le 2n-2$ there are $[k/2]$ moduli if
$2\le k\le n$, there is one modulus if $k=2n-2$ or $k = 2n-3$, and in the remaining case $n<k\le 2n-4$ {\rm (}which is possible for $2n\ge 10$ only{\rm )} there are functional moduli which belong to the space of tuples of  $(s-1)$ functions of $d$ variables, where $s = [(2n-k)/2]$ and $d = 2(k-n)$.
\end{thm}

The precise meaning of the last statement, about the functional moduli, is explained in Theorem \ref{thm-D}, section \ref{sec-main-results}.

\subsection{Tools}
\label{subsec-tools}
 Our starting point is the following proposition.

\begin{prop}[\cite{DJZ2}]
\label{prop-basic-AA}
Let $N = S_1\cup S_2\cup \cdots \cup S_r$ where $S_i$ are germs at $0$ of smooth submanifolds of $\mathbb R^{2n}$ such that
\begin{equation}
\label{regular-union}
dim (T_0S_1 + \cdots + T_0S_r) = dim S_1 + \cdots + dim S_r.
\end{equation}
Let $\omega $ and $\widetilde \omega $ be
symplectic forms on $\mathbb R^{2n}$ with the same restriction to the tangent bundles of $S_i^k$
  and the same restriction to the space $T_0S_1 + \cdots + T_0S_r$. There exists a local diffeomorphism of $\mathbb R^{2n}$
which sends $\widetilde \omega $ to $\omega $ preserving pointwise $N$.
\end{prop}

Strictly speaking, this proposition is not formulated in \cite{DJZ2}, but it is a logical corollary
of two results from this work. The first one, Theorem A in section 2.7 of \cite{DJZ2}, states
that given a germ
of any quasi-homogeneous variety $N\subset \mathbb R^{2n}$ (in particular $N$ in Proposition \ref{prop-basic-AA}) any two symplectic forms $\omega $ and $\widetilde \omega $ on $\mathbb R^{2n}$ with the same {\it algebraic restriction} to $N$ can be brought one to the other by a
local diffeomorphism of $\mathbb R^n$ which preserves $N$ pointwise. We refer to \cite{DJZ2} to the definition of algebraic restrictions and the method of algebraic restrictions for local classification of singular varieties in a symplectic manifold, and we refer to \cite{DJZ1} for the definition of a quasi-homogeneous variety and its role
in local analysis.  Proposition \ref{prop-basic-AA} is a logical corollary of the formulated theorem and
another result from \cite{DJZ2}, Theorem 7.1  stating that under assumptions of Proposition
 \ref{prop-basic-AA} the symplectic forms $\omega $ and $\widetilde \omega $ have the same algebraic restriction to $N$.

\medskip

In the case $k>n$ we also use the following result by Alan S. McRae.

\begin{prop}[\cite{MR}]
\label{prop-basic-BB}
Let $S_1$ and $S_2$ be germs at $0$ of submanifolds of \ $\mathbb R^{2n}$ such that $T_0S_1+T_0S_2 = T_0\mathbb R^{2n}$. Let $\omega $ and $\widetilde \omega $ be symplectic forms on $\mathbb R^{2n}$
coinciding at any point $z\in S_1\cap S_2$ and having the same restrictions to $TS_1$ and $TS_2$. There exits a local diffeomorphism of $\mathbb R^{2n}$ preserving pointwise $S_1$ and $S_2$ and bringing $\widetilde \omega $ to $\omega $.
\end{prop}

In fact, McRae proved a bit stronger result: Proposition \ref{prop-basic-BB} holds not only locally, but also
in a neighbourhood of the union $S_1\cup S_2$ provided $S_1, S_2$ are closed and $\omega $ can be deformed into
$\widetilde \omega $ inside the class of symplectic structures having the properties in Proposition \ref{prop-basic-BB}. The latter certainly holds if $S_1$ and $S_2$ are germs at $0$, for the deformation
$\omega _t = \omega + t(\widetilde \omega - \omega )$, $t\in [0,1]$.

\medskip

We also need the following proposition which is a slight generalization of the Darboux-Givental' theorem.

 \begin{prop}
 \label{prop-Darboux}
 Let $\mu $ and $\widetilde \mu $ be the germs at $0$ of closed 2-forms on $\mathbb R^k$
 of maximal rank $2[k/2]$ such that $\mu (z) = \widetilde \mu (z)$ for any point $z$ of a submanifold
  $Q\subset \mathbb R^k$. If $k$ is odd we assume that the lines $ker \mu (0)$ and $ker \widetilde \mu (0)$
  do not belong to $T_0Q$. Then $\widetilde \mu $ can be brought to $\mu $ by a local
  diffeomorphism of $\mathbb R^k$ which preserves $Q$ pointwise and has identity linear approximation at any point of $Q$.
 \end{prop}

\begin{proof}
In the even-dimensional case it is exactly the Darboux-Givental' theorem up to
the assumption that $\mu $ and $\widetilde \mu $ agree at points of $Q$ and the requirement that
the reducing diffeomorphism has identity linear approximation at points of $Q$. The proof is exactly
the same as the proof of
the Darboux-Givental' theorem in \cite{AG}. The odd-dimensional case reduces to the even-dimensional
case as follows. Take a hypersurface $H$ which contains $Q$ and which is transversal to
the kernels of $\mu$ and $\widetilde \mu$.
The restrictions of $\mu$ and $\tilde \mu$ to $TH$
are symplectic. Take a local diffeomorphism
$\widehat \Phi $ of $H$ which preserves $Q$ pointwise, brings $\tilde \mu \vert _{TH}$ to $\mu \vert _{TH}$, and has
identity linear approximation at any point of $Q$. Take vector
fields $X$ and $\widetilde X$ which generate the kernels of $\mu$, $\widetilde \mu$ respectively and agree at any point of $Q$. Let $\Psi^t$ and $\widetilde \Psi^t$ be the flows of $X$ and $\widetilde X$.
  The required local
diffeomorphism
 $\Phi $ of $\mathbb R^k$ can be constructed as follows: for any point $p\in \mathbb R^k$, close to $0$, we
 take $t = t(p)$ such that $\tilde \Psi^{t(p)} \in H$ and  we set
 $\Phi(p)=(\Psi^{-t(p)} \circ \widehat \Phi \circ \tilde \Psi^{t(p)})(p)$.
\end{proof}

\medskip

Finally, we need a simple part of classification of couples of symplectic forms on the same vector space.
This classification problem was solved in \cite{GZ} by I. Gelfand and I. Zakharevich. We need the following statement formulated in terms of skew-symmetric matrices.

\begin{prop}[\cite{GZ}, section 1]
\label{prop-GZ}
Let $A$ and $B$ be  non-singular skew-symmetric $2s\times 2s$
matrices. The tuple of eigenvalues of the matrix $A^{-1}B$ is
an invariant of the couple $(A,B)$ with respect to the group of
transformations $(A,B)\ \to \ (R^tAR, \ R^tBR)$, ${\rm det} R\ne
0$. The multiplicity of each of the eigenvalues of the
matrix $A^{-1}B$ is greater than $1$ and consequently this matrix has not
more than $s$ distinct eigenvalues. It has exactly $s$ eigenvalues for a generic
couple $A$ and $B$. In this case the tuple of eigenvalues of $A^{-1}B$ is a complete
invariant of $(A,B)$.
\end{prop}

\subsection{Structure of the paper}
In section \ref{sec-linearization} we present linearization theorems which can be easily proved using
Propositions \ref{prop-basic-AA} - \ref{prop-Darboux}. We believe that one of the main contribution of this
work, maybe the main one, is construction of invariants of tuples (\ref{tu}) which we call characteristic numbers.
The characteristic numbers are constructed in section \ref{sec-char-numbers}. In the case $k>n$ a generic tuple (\ref{tu}) defines a manifold $Q=S_1^k\cap S_2^k$ endowed with a symplectic form $\omega \vert _{TQ}$ and
characteristic numbers can be extended to characteristic Hamiltonians on the symplectic manifold $Q$.
The tuple of characteristic Hamiltonians, also constructed in section \ref{sec-char-numbers}, is an invariant of
(\ref{tu}) with $k>n$ up to a symplectomorphism of $Q$. Our final theorems on {\it complete} system of invariants
are contained in section \ref{sec-main-results}, along with normal forms following from these theorems.

\section{Linearization theorems}
\label{sec-linearization}

\subsection{Regular intersection of $S_1^k$ and $S_2^k$} Our final theorems in section \ref{sec-main-results}  hold under certain  genericity condition, which we call the regularity of a tuple (\ref{tu}). It includes
 the regularity of the intersection of the strata $S_1^k, S_2^k$.

\begin{defn}
\label{def-regular-intersection}
The strata $S_1^k, S_2^k$ in (\ref{tu}) have regular intersection if

\noindent $T_0S_1^k\cap T_0S_2^k=\{0\}$ for  $k\le n$ and $T_0S_1^k+T_0S_2^k = T_0\mathbb R^{2n}$ for  $k>n.$
\end{defn}

\subsection{Linearization} The regularity of the intersection of the strata is a property of the linearization of (\ref{tu}) which is a tuple
\begin{equation}
\label{tu-spaces}
\left(W^{2n}, \ \sigma , \ U_1^k\cup U_2^k\right)
\end{equation}
consisting  of a $2n$-dimensional vector space $W^{2n}$, a symplectic (i.e. non-degenerate) 2-form $\sigma $ on $W^{2n}$, and the union of the $k$-dimensional subspaces $U_1^k, U_2^k$.

\begin{defn}
\label{def-linearization}
The linearization of a tuple $(\mathbb R^{2n}, \omega , S_1^k\cup S_2^k)$ at a point $z\in S_1^k\cap S_2^k$ is the tuple (\ref{tu-spaces}) with
$W^{2n} = T_z\mathbb R^{2n}, \ \sigma = \omega \vert _{W^{2n}}$ and $U_i^k = T_zS_i^k$, \ $i = 1,2$.
\end{defn}

\subsection{Tuples (\ref{tu}) with the same linearization} The following two theorems can be easily proved
using Propositions \ref{prop-basic-AA} - \ref{prop-Darboux}.

\begin{thm}
\label{thm-basic-A}
Two tuples (\ref{tu}) with the same regularly intersecting symplectic or quasi-symplectic strata $S_1^k, S_2^k$ of dimension $k\le n$ and the same linearization at $0\in \mathbb R^{2n}$ are equivalent.
\end{thm}

\begin{thm}
\label{thm-basic-B}
Two tuples (\ref{tu}) with the same regularly intersecting symplectic or quasi-symplectic strata $S_1^k, S_2^k$ of dimension $k>n$ and the same linearization at any point $z\in S_1^k\cap S_2^k$ close to $0\in \mathbb R^{2n}$ are equivalent provided that the restrictions of $\omega $ and $\widetilde \omega $ to $T_0S_1^k\cap T_0S_2^k$ have the maximal rank \ $2(n-k)$.
\end{thm}

\begin{proof} Theorem \ref{thm-basic-A} can be reduced to Proposition \ref{prop-basic-AA} with $r=2$ as follows.
Since the linearizations of the tuples are the same, we have $\omega (0) = \widetilde \omega (0)$. By Proposition \ref{prop-Darboux} with $Q = \{0\}$ there exist local diffeomorphisms $\phi _i$ of $S_i^k$, $i=1,2$ with identity linear approximations at $0$ which bring the restriction of $\widetilde \omega $ to $TS_i^k$ to the restriction of
$\omega $ to $TS_i^k$, $i=1,2$.
 We can construct a local diffeomorphism $\Phi $ of $\mathbb R^{2n}$, also with identity linear approximation at $0$, which preserves $S_i^k$ and whose restriction to $S_i^k$ coincide with $\phi _i$, $i=1,2$. The diffeomorphism $\Phi $ brings $\widetilde \omega $ to a symplectic form $\widehat \omega $ such that $\omega $ and $\widehat \omega $ have the same restriction to $TS_i^k$ and the same restriction to the
 space $T_0\mathbb R^{2n}$. Proving the equivalence of tuples (\ref{tu}) we may replace $\widetilde \omega $ by $\widehat \omega $. Now the equivalence follows from Proposition \ref{prop-basic-AA}.

\medskip

Theorem \ref{thm-basic-B} can be reduced to Proposition \ref{prop-basic-BB} in exactly the same way using
Proposition \ref{prop-Darboux} with $Q = S_1^k\cap S_2^k$. If $k$ is odd, we have a right to use Proposition \ref{prop-Darboux}
because the one-dimensional kernels of forms $\omega (0)$ and $\widetilde \omega (0)$ are not tangent  to $Q$. It follows from the assumption in Theorem \ref{thm-basic-B} that
the restrictions of $\omega $ and $\widetilde \omega $ to $T_0Q$ have the maximal rank.
\end{proof}

\subsection{Isomorphic tuples (\ref{tu-spaces}). Linearization theorem.}

Theorem \ref{thm-basic-A} implies the following linearization theorem involving the natural definition of
isomorphic tuples (\ref{tu-spaces}).

\begin{defn}
\label{def-isomorphic}
A tuple (\ref{tu-spaces})
is isomorphic to a tuple of the
same form with $\widetilde W^{2n}, \ \widetilde \sigma, \ \widetilde U_1^k\cup \widetilde U_2^k$ \ if there exists
an isomorphism from $W^{2n}$ to $\widetilde W^{2n}$ sending $\widetilde \sigma $ to $\sigma $ and sending $U_1^k\cup U_2^k$ \ to
\ $\widetilde U_1^k\cup \widetilde U_2^k$.
\end{defn}

\begin{thm}
\label{thm-linearization}
If two tuples (\ref{tu}) with regularly intersecting symplectic or quasi-symplectic strata are
equivalent then their linearizations at $0\in \mathbb R^{2n}$ are isomorphic. In the case $k\le n$
the tuples are equivalent if and only if their linearizations at $0\in \mathbb R^{2n}$ are isomorphic.
\end{thm}

\begin{proof} The first statement follows from the observation that if two tuples (\ref{tu}) are equivalent via
a local diffeomorphism $\Phi $ then their linearizations at $0$ are isomorphic via the isomorphism $d\Phi\vert _0$.
The second statement is a direct corollary of Theorem \ref{thm-basic-A} and the fact that for $k\le n$ any pair of germs at $0$ of smooth $k$-dimensional submanifolds regularly intersecting is diffeomorphic to its linearization by a diffeomorphism with identity linear approximation at $0$.
\end{proof}

Using Theorem \ref{thm-basic-B} we could formulate a linearization theorem for the case $k>n$, with necessary and sufficient rather than only necessary condition for the equivalence of tuples (\ref{tu}), but the formulation of such a theorem is rather involved, and we do not need it for the proof of our final theorem for the case $k>n$, we use just Theorem \ref{thm-basic-B}.

\section{Characteristic numbers and characteristic Hamiltonians}
\label{sec-char-numbers}

\subsection{Regular tuples (\ref{tu}) and (\ref{tu-spaces})} By Theorem \ref{thm-linearization} the problem of classifying tuples (\ref{tu}) reduces to the problem of
classifying tuples (\ref{tu-spaces}) with respect to isomorphisms if $k\le n$ and
contains this problem if $k>n$.
We solve this problem for generic tuples (\ref{tu-spaces}), namely for tuples (\ref{tu-spaces})
satisfying the following conditions.

\begin{defn}
\label{def-regular}
A tuple (\ref{tu}) will be called {\it regular} if its linearization at $0\in \mathbb R^{2n}$, a tuple of form (\ref{tu-spaces}), is regular.
A tuple (\ref{tu-spaces}) is regular if its ingredients satisfy the following requirements.

\medskip

\noindent 1. The subspaces $U_1^k$ and $U_2^k$ are symplectic or quasi-symplectic, with regular intersection:
 $U_1^k\cap U_2^k=\{0\} \ \text{for}\  k\le n;  \ \ U_1^k+U_2^k = W^{2n} \ \text{for} \ k>n.$

\medskip

\noindent 2. If $k\le n$ the restriction of $\sigma $ to $U_1^k+U_2^k$ has maximal rank $2k$. If $k>n$                 the restriction of $\sigma $ to $U_1^k \cap U_2^k$ has maximal rank $2(k-n)$.
\medskip

\noindent 3. The skew-orthogonal complement to $U_1^k$ in $\left(W^{2n}, \sigma \right)$ is transversal to   $U_2^k$.
\medskip

\noindent 4. This condition is required only for odd $k$. In this case the previous conditions imply that $\ell _i =   \label{tu-alg-reduced}
ker \ \sigma \vert _{U_i^k}$, \ $i=1,2$  are different 1-dimensional subspaces of $W^{2n}$. We require that the 2-form $\sigma $ does not annihilate the plane $\ell _1+\ell _2$.

\end{defn}

\subsection{Reduction of dimensions} Our first step in classifying regular tuples (\ref{tu-spaces}) with $2\le k\le 2n-2$ is reduction of dimensions $2n, k$ to
$4s, 2s$. Namely we associate to a regular tuple (\ref{tu-spaces}) a tuple
\begin{equation}
\label{tu-spaces-reduced}
\left(\widehat W^{4s}, \ \widehat \sigma , \ \widehat U_1^{2s}\cup \widehat U_2^{2s}\right)
\end{equation}
\begin{equation}
\label{s}
s = s(k,n) = {\rm
min}\ \big([k/2], \ [(2n-k)/2]\big)
\end{equation}
constructed as follows, where
$\ell _i = ker \ \sigma \vert _{U_i^k}$ and the sign $^\perp $ denotes the skew-orthogonal
complement in the symplectic space $(W^{2n}, \sigma)$:

\begin{equation*} \label{W-even}
 k \ \text{even}: \ \ \ \ \ \widehat W^{4s}  =
\begin{cases} U_1^k + U_2^k\ \ \ \hskip 1cm \text{if} \ k\le n \\
\left(U_1^k \cap U_2^k\right)^{\perp }  \ \ \ \ \ \text{if}\
k>n;\end{cases}\hskip 2.4cm \end{equation*}
\begin{equation*}
\label{W-odd} k \ \text{odd}: \ \ \ \ \ \ \widehat W^{4s}  =
\begin{cases} \left(U_1^k + U_2^k \right)\cap (\ell _1 + \ell _2)^{\perp }\ \ \ \hskip 0.7cm \text{if} \ k\le n \\ \left(U_1^k
\cap U_2^k\right)^{\perp }\cap (\ell _1 + \ell _2)^{\perp  } \ \ \
\ \ \text{if}\ k>n
\end{cases} \end{equation*}
and for
any parity of $k$ we set
 $$\widehat \sigma  = \sigma \vert
_{\widehat W^{4s}},  \ \ \widehat U_i^{2s} = U_i^k\cap \widehat W^{4s},  \ i = 1,2.$$

\begin{prop}
\label{prop-reduced-tuple}
For any regular tuple (\ref{tu-spaces}) the dimension of $\widehat W^{4s}$ is $4s$, the dimension of $\widehat U_i^{2s}$ is  $2s$ and the form
$\widehat \sigma $ on $\widehat W^{4s}$ is symplectic so that the tuple (\ref{tu-spaces-reduced}) has the same form as the tuple (\ref{tu-spaces}). The tuple (\ref{tu-spaces-reduced}) is also regular, i.e. it satisfies all the requirements in Definition \ref{def-regular}. Two regular tuples (\ref{tu-spaces}) are isomorphic if and only if so are the corresponding reduced tuples (\ref{tu-spaces-reduced}).
\end{prop}

\begin{defn}
\label{def-reduced}
The constructed tuple (\ref{tu-spaces-reduced}) will be called
reduced tuple, associated with a regular tuple (\ref{tu-spaces}).
\end{defn}

Proposition \ref{prop-reduced-tuple}, reducing classification of regular tuples (\ref{tu-spaces}) to the classification of
regular tuples (\ref{tu-spaces-reduced}),  is a simple statement and we leave its proof to a reader. The proof requires not more than the linear Darboux theorem stating that the rank of a 2-form on a  vector space is its complete invariant with respect to isomorphisms.

\subsection{Two linear operators defined by reduced tuples (\ref{tu-spaces-reduced})} Our next step is the construction of two linear operators
associated with such tuples. The regularity of (\ref{tu-spaces-reduced}) implies that
we have the direct sums $\widehat W^{4s} = \widehat U_1^{2s}\oplus \widehat U_2^{2s} = \widehat U_1^{2s} \oplus (\widehat U_1^{2s})^{\perp} = \widehat U_2^{2s} \oplus (\widehat U_2^{2s})^{\perp}$ where
as above the sign $\perp $ means the skew-orthogonal complement in the space $(\widehat W^{4s}, \sigma )$.
Consider  the projections associated with the last two direct sums:

\smallskip

$$\widehat W^{4s} = \widehat U_1^{2s} \oplus (\widehat U_1^{2s})^{\perp} , \ \ \pi _1 :  \widehat W^{4s}\to
\widehat U_1^{2s},$$ $$\widehat W^{4s} = \widehat U_2^{2s} \oplus (\widehat U_2^{2s})^{\perp} , \ \ \pi _2 : \widehat W^{4s}\to \widehat U_2^{2s}.$$

Define linear operators $L_1: \widehat U_1^{2s}\to \widehat U_1^{2s}$ and $L_2: \widehat U_2^{2s}\to \widehat U_2^{2s}$ by
the diagram

\medskip

$\hskip 5.2cm _{L_1}$

$ \hskip 4cm  \widehat U_1^{2s}  \ \ \ \ \longrightarrow \ \ \ \ \widehat U_1^{2s}$ \hskip
2.2cm $L_1 = \pi _1\circ  (\pi _2\vert _{\widehat U_1^{2s}})$

 $\hskip 4cm \searrow _{\pi _2 } \hskip .5cm  _{\pi _1}\nearrow \ \  \ \searrow
 \ _{\pi _2}$ \ \ \ :

\medskip

$\hskip 4.8cm \widehat U_2^{2s} \  \ \ \ \longrightarrow \ \ \ \ \widehat U_2^{2s} $ \hskip
1.4cm  $L_2 = \pi _2 \circ (\pi _1 \vert _{\widehat U_2^{2s}})$

 $\hskip 6 cm ^{L_2}$

\begin{lem}
\label{lem-op} For any regular tuple (\ref{tu-spaces-reduced}) the linear
operators $L_1$ and $L_2$ are conjugate and consequently have the
same eigenvalues.
\end{lem}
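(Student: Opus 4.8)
The plan is to recognize $T_1$ and $T_2$ as the two composites $BA$ and $AB$ of a single pair of maps between $U_1$ and $U_2$, and then to upgrade the elementary fact that such composites share their nonzero spectrum into genuine conjugacy by showing that both maps are isomorphisms. Concretely, set $A = \pi_2|_{U_1}: U_1 \to U_2$ and $B = \pi_1|_{U_2}: U_2 \to U_1$. Reading off the diagram, $T_1 = \pi_1 \circ (\pi_2|_{U_1}) = B\circ A$ and $T_2 = \pi_2\circ (\pi_1|_{U_2}) = A\circ B$. These compositions make sense precisely because, by part (c) of Proposition \ref{prop-tu-red}, each $U_i$ is a symplectic subspace, so that $W = U_i\oplus U_i^{\sigma}$ is a genuine direct sum and the projection $\pi_i$ is well defined.

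Next I would verify that $A$ and $B$ are isomorphisms. Since $\ker \pi_2 = U_2^{\sigma}$, the kernel of $A$ is $U_1\cap U_2^{\sigma}$, and likewise $\ker B = U_2\cap U_1^{\sigma}$. By part (d), $U_1^{\sigma}$ is transversal to $U_2$, i.e. $U_1^{\sigma}+U_2 = W$; since $\dim U_1^{\sigma} + \dim U_2 = 2s + 2s = 4s = \dim W$, this sum is direct and $U_2\cap U_1^{\sigma} = \{0\}$, so $B$ is injective. Taking $\sigma$-orthogonal complements of $U_1^{\sigma}+U_2 = W$ and using $(U_1^{\sigma})^{\sigma} = U_1$ together with $W^{\sigma} = \{0\}$ (as $\sigma$ is symplectic by (b)) gives $U_1\cap U_2^{\sigma} = \{0\}$, so $A$ is injective as well. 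Because $\dim U_1 = \dim U_2 = 2s$ by (c), both $A$ and $B$ are bijective. The conjugacy is then immediate: since $A$ is invertible,
$$A\, T_1\, A^{-1} = A\,(B A)\,A^{-1} = (A B)(A A^{-1}) = A B = T_2,$$
so $T_2 = A T_1 A^{-1}$, and hence $T_1$ and $T_2$ are conjugate; in particular they have the same characteristic polynomial and the same eigenvalues.

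I expect the only real content—and the step one must not skip—to be the invertibility of $A$ and $B$. For arbitrary maps between equidimensional spaces, $BA$ and $AB$ share only their \emph{nonzero} eigenvalues with multiplicities and need not be conjugate (their Jordan structure at $0$ can differ); it is exactly transversality condition (d) that forces $A$ and $B$ to be isomorphisms and thereby turns this weak spectral coincidence into honest conjugacy. It is also worth recording the adjunction $\sigma(Au_1, u_2) = \sigma(u_1, Bu_2)$ for $u_1\in U_1$, $u_2\in U_2$, obtained by splitting $u_1$ along $U_2\oplus U_2^{\sigma}$ and $u_2$ along $U_1\oplus U_1^{\sigma}$ and killing the cross terms; this exhibits $B$ as the $\sigma$-adjoint of $A$ with respect to the nondegenerate pairing of $U_1$ and $U_2$ induced by $\sigma$, which gives an alternative derivation of the invertibility of $B$ once that of $A$ is established.
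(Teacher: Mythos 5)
Your proof is correct and follows essentially the same route as the paper: the paper also establishes the commutation relation $\pi_2\circ T_1 = T_2\circ\pi_2$ on $U_1$ and deduces conjugacy from the fact that $\pi_2|_{U_1}$ (your map $A$) is a bijection between $U_1$ and $U_2$, a consequence of conditions (c) and (d). Your write-up merely spells out in more detail the injectivity of $A$ and $B$ via the skew-orthogonal complement computation, which the paper leaves implicit.
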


\begin{proof} Note that the given diagram implies that the diagram

\hskip 4.9cm $_{L_1}$

$\hskip 4cm \widehat U_1^{2s} \ \ \longrightarrow \ \ \widehat U_1^{2s}$

$\hskip 3.8cm \pi _2\downarrow \hskip 1.2cm \downarrow \pi _2$

\medskip

$\hskip 4cm \widehat U_2^{2s} \ \ \longrightarrow \ \ \ \widehat U_2^{2s}$

\hskip 4.9cm $^{L_2}$

\noindent is commutative. Since (\ref{tu-spaces-reduced}) is a regular tuple, the three spaces $\widehat U_1^{2s}, \widehat U_2^{2s},
(\widehat U_1^{2s})^{\perp }$ are transversal one to the other. It follows that
$\pi _2$ restricted to $\widehat U_1^{2s}$ is a bijection between $\widehat U_1^{2s}$ and
$\widehat U_2^{2s}$. \end{proof}

\subsection{Characteristic numbers}

\begin{defn}
\label{def-charact-numbers}
Let (\ref{tu}) be a regular tuple, (\ref{tu-spaces}) is its linearization at $0\in \mathbb R^{2n}$
and (\ref{tu-spaces-reduced}) the reduced linearization.
The eigenvalues (real and complex) of the constructed linear operators $L_1$ or $L_2$ will be called {\it characteristic numbers}
of the tuples (\ref{tu}), (\ref{tu-spaces}), (\ref{tu-spaces-reduced}).
\end{defn}

The following statement is a direct corollary of Theorem \ref{thm-linearization} and Proposition \ref{prop-reduced-tuple}.

\begin{thm}
\label{thm-char-numbers}
The characteristic numbers of a regular tuple (\ref{tu}) are its invariants: if two regular tuples of form (\ref{tu}) are equivalent then their characteristic numbers are the same.
\end{thm}

The linear operators $L_1$ and $L_2$ are defined on vector spaces of dimension $2s$ and from the first glance it seems that a generic regular tuple (\ref{tu-spaces-reduced}) and consequently a generic
regular tuple (\ref{tu}) has $2s$ distinct characteristic numbers. It is not so.
The matrix of $L_1$, resp. $L_2$ in some and then any basis of the vector space
$\widehat U_1^{2s}$, resp. $\widehat U_2^{2s}$ is the product of two skew-symmetric $2s\times 2s$ matrices, and the eigenvalues of such
matrices are not generic in the space of tuples of $2s$ complex numbers.
To explain this claim, take any basis $B_1=(u_{1,1},...u_{1,2s})$ of $\widehat U_1^{2s}$ and any basis $B_2=(u_{2,1},...u_{2,2s})$ of $\widehat U_2^{2s}$. The 2-form $\sigma $ on $\widehat W^{4s}$ is defined by a $4s\times 4s$ skew-symmetric matrix of the form
\begin{equation}
\label{matrices}
\sigma : \ \ \begin{pmatrix}A_1&C\cr C^t&A_2\end{pmatrix}, \ \ A_1,A_2,C\in Mat(2s\times 2s), \ \ A_1^t = -A_1, \ A_2^t = -A_2.
\end{equation}
The matrices $A_1$ and $A_2$ are non-singular. Since the tuple (\ref{tu-spaces-reduced}) is regular, the skew- orthogonal complement to $\widehat U_1^{2s}$
is transversal to $\widehat U_2^{2s}$ and it follows
that the matrix $C$ is also non-singular. The latter allows to change the basis $B_1$ by the transition
matrix $C^{-1}$ to a new  basis $\widetilde B_1$ of $\widehat U_1^{2s}$ so that in the basis $(\widetilde B_1, B_2)$ the 2-form $\sigma $ is defined by matrix (\ref{matrices}) with $C = I$ (certainly the matrices $A_1$ and $A_2$ will change).
After this reduction of $C$ to $I$, it is not hard to compute the matrix of the linear operator $L_1$ in the basis $\widetilde B_1$, it is the skew-symmetric matrix $A_1^{-1}A_2$.

\medskip

It is easy to see that when changing the {\it both} basis $B_1$ and $B_2$, the matrix $C$ in
(\ref{matrices}) remains identity if and only if
the transformations of $B_1$ and $B_2$ are defined by matrices $R$ and $(R^t)^{-1}$, where $R$ is any non-singular $2s\times 2s$ matrix. Such transformations of $B_1$ and $B_2$ bring the matrices $A_1$ and $A_2$ in
(\ref{matrices}) to the matrices $A_1\to R^tA_1R, \ A_2\to R^tA_2R$.

\medskip

The outcome of this linear algebra computation (expressed without details which we leave to a reader) is as follows.

\begin{prop}
\label{prop-algebra-1}
One can associate to any regular tuple (\ref{tu-spaces-reduced}) two non-singular skew-symmetric $2s\times 2s$ matrices $A_1, A_2$ so that the characteristic numbers of (\ref{tu-spaces-reduced}) are the eigenvalues of the
matrix $A_1^{-1}A_2$ and two tuples (\ref{tu-spaces-reduced}) are isomorphic if and only if
the corresponding couples of skew-symmetric matrices can be brought one to the other by a transformation
$(A_1,A_2)\to (R^tA_1R, \ R^tA_2R)$, $det R\ne 0$. Any couple $(A_1,A_2)$ with two non-singular skew-symmetric $2s\times 2s$ is realizable, i.e. it is associated to some regular tuple (\ref{tu-spaces-reduced}).
\end{prop}

Consequently the classification of regular
tuples (\ref{tu-spaces-reduced}) is exactly the same problem as the classification
of couples of symplectic forms on a $2s$-dimensional vector space.
Now we can use the classification of couples of symplectic forms given in \cite{GZ}. We need a part of
this classification given in Proposition \ref{prop-GZ} of the
present work.
The following theorem is a direct corollary of
this proposition and
Proposition \ref{prop-algebra-1}.

\begin{prop}
\label{prop-ei}
Each of the characteristic numbers of a regular tuple (\ref{tu-spaces-reduced})
 is different from $0$ and has multiplicity $\ge 2$. Consequently (\ref{tu-spaces-reduced}) has not more than $s$ distinct characteristic numbers; if $s=1$ then it has only one characteristic number.
The multiplicity of each of the characteristic numbers of a generic
 regular tuple (\ref{tu-spaces-reduced}) is equal to $2$ and consequently a generic regular tuple (\ref{tu-spaces-reduced}) has $s$ distinct characteristic numbers.
In this case (\ref{tu-spaces-reduced}) is isomorphic to another regular tuple of the same form if and only if
the two tuples have the same characteristic numbers.
 \end{prop}

The following statement is not more than a logical corollary of Proposition \ref{prop-ei} and Definition \ref{def-charact-numbers}, but it is worth to display it.

\begin{prop}
\label{prop-char-numbers} Let $2\le k\le 2n-2$.
The characteristic numbers of a regular tuple (\ref{tu})  have the same properties as in Proposition \ref{prop-ei} with $s = s(k,n) =
{\rm
min}\ \big([k/2], [(2n-k)/2]\big)$.
\end{prop}

\subsection{Characteristic Hamiltonians}
In the case $k>n$ and under the genericity assumption that each of the characteristic numbers has minimal possible multiplicity $2$ and
consequently we have $s = s(k,n) = [(2n-k)/2]$ distinct characteristic numbers $\lambda _1,...,\lambda _s$,
the characteristic numbers can be extended to functions on the symplectic manifold
\begin{equation}
\label{Q}
\left(Q, \omega _Q\right), \ \ \ \ Q = S_1^k\cap S_2^k, \ \omega _Q = \omega \vert _{TQ}
\end{equation}
(the fact that it is symplectic follows from the regularity of a tuple (\ref{tu})) by associating to
a point $z\in Q$, close to $0\in \mathbb R^{2n}$, the characteristic numbers of the linearization of
(\ref{tu}) at $z$. We obtain $s$ smooth functions $h_1, \dots , h_s$ on the symplectic manifold (\ref{Q}) taking
the values $\lambda _1,...,\lambda _s$ at $z=0$.

\begin{defn}
Let $k>n$.
The constructed functions $h_1,...,h_s$, $s = s(k,n) = [(2n-k)/2]$ on the symplectic manifold (\ref{Q}) will be called characteristic Hamiltonians of a regular tuple (\ref{tu}).
\end{defn}

It is worth to note that this definition works {\it only} under the assumption that
each of the characteristic numbers of a regular tuple (\ref{tu}) has minimal possible multiplicity
$2$ so that the linearization of (\ref{tu}) at any point $z\in Q$ close to $0$ has {\it the same} number $s = s(k,n) = [(2n-k)/2]$ of distinct characteristic numbers.

\section{Theorems on complete system of invariants}
\label{sec-main-results}

\subsection{The case ${\bf 2\le k\le n}$}

\begin{thm}
\label{thm-A}
Let $2\le k\le n$. Assume that the characteristic numbers of two regular tuples (\ref{tu})
have minimal possible multiplicity $2$ and
consequently each of the tuples has $[k/2]$ distinct characteristic numbers. The tuples
are equivalent if and only if their characteristic numbers are the same.
\end{thm}

\begin{proof}
The ``only if" part holds without the assumption on the
multiplicities and it is a part of Theorem \ref{thm-linearization}. The ``if"  part
is a direct corollary of the same Theorem \ref{thm-linearization}, Proposition \ref{prop-ei}, and
Proposition \ref{prop-reduced-tuple}. In fact, if
the characteristic numbers of two tuples $T$ and $\widetilde T$ of form (\ref{tu})
are the same and have
minimal multiplicity $2$ then by Proposition \ref{prop-ei} the reduced linearizations at $0$ of $T$ and $\widetilde T$
are isomorphic, by Proposition \ref{prop-reduced-tuple} their linearization at $0$ are also isomorphic,
and by Theorem \ref{thm-linearization} the tuples are isomorphic.
\end{proof}

 Note that the assumption on the multiplicities in Theorem \ref{thm-A} always holds for $k=2$ and $k=3$ when we have only one characteristic number. In the case $k=2$ Theorem \ref{thm-A} was proved in our work \cite{DJZ2} section 7.4,
where the characteristic number  is called there the  index of non-orthogonality between
 $S^2_1$ and $S^2_2$.

\subsection{The case ${\bf n<k\le 2n-2}$}
Consider two regular tuples of form (\ref{tu}):
\begin{equation}
\label{tutu}
T = \left(\mathbb R^{2n}, \ \omega , \ S_1^k\cup S_2^k\right)_0, \ \ \widetilde T = \left(\mathbb R^{2n}, \ \widetilde \omega , \ \widetilde S_1^k\cup \widetilde S_2^k\right)
\end{equation}
and the symplectic manifolds
\begin{equation}
\label{QQ}
\begin{split}
(Q, \omega _Q), \ \ Q = S_1^k\cap S_2^k, \ \ \omega _Q = \omega \vert _{TQ}\\
(\widetilde Q, \widetilde \omega  _{\widetilde Q}), \ \
\widetilde Q = \widetilde S_1^k\cap \widetilde S_2^k,
\ \ \widetilde \omega _{\widetilde Q} = \widetilde \omega \vert _{T\widetilde Q}.
\end{split}
 \end{equation}

\begin{thm}
\label{thm-B} Let $n<k\le 2n-2$. Assume that the characteristic numbers of two regular tuples  (\ref{tutu})   have minimal possible multiplicity $2$ and
consequently each of the tuples has $s=s(k,n) = [(2n-k)/2]$ distinct characteristic numbers and the characteristic Hamiltonians $h_1,...,h_s$ and $\widetilde h_1,...,\widetilde h_s$ are well-defined.
The tuples $T$ and $\widetilde T$ are equivalent if and only if there exists a local diffeomorphism
$\phi: Q\to \widetilde Q$ which sends $\widetilde \omega _{\widetilde Q}$ to $\omega _Q$ and the tuple of functions $(\widetilde h_1,...,\widetilde h_s)$ to $(h_1,...,h_s)$.
\end{thm}

Like in Theorem \ref{thm-A}, the assumption on multiplicities always holds if $k = 2n-3$ or $k=2n-2$ when we have only one characteristic number.

\subsubsection{Proof of the ``only if" part}
Assume that the tuples (\ref{tutu}) are equivalent via a local diffeomorphism $\Phi $ of $\mathbb R^n$. Since $\Phi $ sends $S_1^k$ to $\widetilde S_1^k$ and $S_2^k$ to $\widetilde S_2^k$ it sends $Q$ to
$\widetilde Q$. It also sends $\widetilde\omega $ to $\omega $ and consequently the restriction $\phi $ of $\Phi $ to $Q$ sends the form $\widetilde \omega _{\widetilde Q}$ to the form $\omega _Q$. The differential of the diffeomorphism $\Phi $ at a point $z\in Q$ sends the linearization of $T$ at $z$ to the linearization of $\widetilde T$ at the point $\phi (z)$. Therefore these two linearizations are isomorphic. By Proposition \ref{prop-reduced-tuple} the corresponding reduced linearizations are also isomorphic. By Theorem \ref{thm-char-numbers} these reduced linearizations have the same characteristic numbers. Therefore $\widetilde h_i(\phi (z)) = h_i(z)$, up to numeration.

\subsubsection{Proof of the ``if" part}
The proof of the ``if" part is reduction to Theorem \ref{thm-basic-B}. We will assume, without loss of generality, that $S_1^k=\widetilde S_1^k$ and $S_2^k = \widetilde S_2^k$. Let $\phi $ be a local diffeomorphism of $Q$
which brings $\widetilde h_i$ to $h_i$, up to numeration. We can extend $\phi $ to a local diffeomorphism $\Psi $
of $\mathbb R^{2n}$ which preserves $S_1^k$ and $S_2^k$. Applying $\Psi $
to the tuple $\widetilde T$ we obtain a tuple with characteristic Hamiltonians coinciding with those of the tuple $T$, up to numeration. It reduces the proof to the case that $T$ and $\widetilde T$ satisfy the following conditions:

\medskip

\noindent (a) \  $S_i^k=\widetilde S_i^k$ and consequently $Q  =\widetilde Q$;

\smallskip

\noindent (b)\  the reduced linearizations of (\ref{tutu}) at any point $z\in Q$

\hskip .15cm have the same characteristic numbers;

\smallskip

\noindent (c) \ $\omega $ and $\widetilde \omega $ have the same restriction to the tangent bundle of $Q$.

   \medskip

    By Propositions \ref{prop-ei} and \ref{prop-reduced-tuple} there is a family of isomorphisms
$\tau _z: T_z\mathbb R^{2n}\to T_z\mathbb R^{2n}$, parameterized by a point $z\in Q$, which brings the linearization of $T$ at $z\in Q$ to the linearization of $\widetilde T$ at the same point $z$. Condition (c) allows to chose $\tau _z$ such that it preserves $T_zQ$ and its restriction to $T_zQ$ is the identity map, for any $z\in Q$. Having a family of isomorphism $\tau _z$ with this property, we can construct a local diffeomorphism $\Phi $ of $\mathbb R^{2n}$
which preserves $S_1^k$ and $S_2^k$ pointwise (and consequently preserves $Q$ pointwise) and such that $d\Phi \vert _z = \tau _z$ for any $z\in Q$. Applying this diffeomorphism $\Phi $ to the tuple $\widetilde T$ we obtain
a tuple $\widehat T$ such that $T$ and $\widehat T$ have the same linearization at any point $z\in Q$.
Now the equivalence of the tuples follows from Theorem \ref{thm-basic-B}.

\subsection{The cases ${\bf k=2n-3, k=2n-2}$}
A short formulation of Theorem \ref{thm-B} is to say that under the given condition on multiplicity of the
characteristic numbers the tuple of characteristic Hamiltonians defined up to a symplectomorphism of
the symplectic manifold (\ref{Q}) is a complete invariant of a regular tuple (\ref{tu}) with $n<k\le 2n-2$.
Nevertheless, strictly speaking, Theorem \ref{thm-B} is a reduction theorem rather than a theorem on a complete system of invariants. It reduces the classification of generic tuples (\ref{tu}) with $n<k\le 2n-2$
to the classification of $[(2n-k)/2]$ functions on a symplectic manifold of dimension $2(n-k)$ with
respect to local symplectomorphisms of this manifold. It is well known that a single non-singular
function $h$ (such that $dh(0)\ne 0)$ can be reduced to $h(0)+z_1$ where $z_1$ is one of local coordinates.
Therefore Theorem \ref{thm-B} implies the following corollary.

\begin{thm}
\label{thm-C} Let $k = 2n-2\ge 4$ or $k=2n-3\ge 5$ so that
tuples (\ref{tutu}) have only one characteristic number $\lambda $ and $\widetilde \lambda $.
Assume that the characteristic Hamiltonians $h$ and $\widetilde h$ are non-singular: $dh(0)\ne 0$ and $d\widetilde h(0)\ne 0$. The tuples (\ref{tutu}) are equivalent if and only if $\lambda = \widetilde \lambda $.
\end{thm}

\subsection{Normal forms}

Using Theorems \ref{thm-A} - \ref{thm-C} it is easy to construct the following normal forms.
If $2\le k\le n$ then in suitable local coordinates $x,y\in \mathbb R^k, \ p,q \in \mathbb R^{n-k}$
a tuple (\ref{tu}) satisfying the assumptions of Theorem \ref{thm-A} has the form
\begin{equation}
\label{nf-A}
\begin{split}
S_1^k = \{y=p=q=0\}, \ S_2^k = \{x=p=q=0\},  \hskip 1cm \\
\omega = \sum _{i=1}^kdx_idy_i + \sum _{i=1}^{n-k}dp_idq_i +
\sum _{i=1}^sdx_{2i-1}dx_{2i} + \sum _{i=1}^{[k/2]}\frac{dy_{2i-1}dy_{2i}}{\lambda _i}
\end{split}
\end{equation}
If $n<k\le 2n-2$ then in suitable local coordinates $x,y\in \mathbb R^{2n-k}, \ p,q\in \mathbb R^{k-n}$ a tuple (\ref{tu}) satisfying the assumptions of Theorem \ref{thm-B} has the form
\begin{equation}
\label{nf-B}
\begin{split}
S_1^k = \{y=0\}, \ S_2^k = \{x=0\},\hskip 3cm \\
\omega = \sum _{i=1}^{2n-k}dx_i  dy_i + \sum _{i=1}^{k-n}dp_i dq_i +
\sum _{i=1}^{[(2n-k)/2]}dx_{2i-1} dx_{2i} + \sum _{i=1}^{[(2n-k)/2]}\frac{dy_{2i-1} dy_{2i}}{h_i(p,q)}
\end{split}
\end{equation}
The parameters $\lambda _i$ in normal form (\ref{nf-A})
 are moduli, and they are
exactly the characteristic numbers. The functional parameters $h_i(p,q)$ in normal form (\ref{nf-B})
are exactly the characteristic Hamiltonians.
 In the case that some of the characteristic numbers $\lambda _i = h_i(0)$ are not real
these normal forms  hold in complex coordinates. Namely, if $\lambda _i = \bar \lambda _j\not\in \mathbb R$ \ then
\ $x_{2i-1} = \bar x_{2j-1}, \ x_{2i} = \bar x_{2j}, \ y_{2i-1} = \bar y_{2j-1}, \ y_{2i} = \bar y_{2j}$ are complex valued
conjugate coordinates and $h_i(p,q) = \bar h_j(p,q)$ are complex valued conjugate functions.

\medskip

If $k = 2n-2\ge 4$ or $k=2n-3\ge 5$ and a tuple (\ref{tu}) satisfies the assumption of Theorem \ref{thm-C} then in
suitable coordinates it has the form (\ref{nf-B}) with $h_1(p,q)\equiv \lambda _1$, i.e. with only one parameter
$\lambda _1$.

\subsection{The case $\bf{ n<k\le 2n-4}$. Functional moduli} Note that this case is possible only if the dimension of the symplectic space $(\mathbb R^{2n}, \omega )$ is at least $10$.
Theorem {\ref{thm-B} implies that a generic tuple (\ref{tu}) has in suitable local coordinates the normal form (\ref{nf-B}) with $h_1(p,q)\equiv \lambda _1$. (The genericity condition are the assumption of Theorem \ref{thm-B} and
the requirement hat at least one of the characteristic Hamiltonians is a non-singular function).
This normal form is parameterized by $s-1$ functions $h_2(u,v), ...,h_s(u,v)$, $s = [(2n-k)/2]\ge 2$.
Since the group of local symplectomorphisms can be parameterized by one function, it is almost clear that this normal form is asymptotically exact in the following sense.

\begin{defn}
\label{def-funct-moduli}
Let $m_\ell $ be the number of moduli in the classification of generic germs
(in any classification problem of local analysis). Assume that $m_{\ell }\to \infty $ as $\ell \to \infty $.
A normal form, parameterized by functions,  is called asymptotically exact if the
number of parameters $p_\ell $ of its $\ell $-jet satisfies $p_\ell = m_\ell (1+o(1))$ as $\ell \to \infty $.
\end{defn}

With this definition, we obtain one more, the following  corollary of Theorem \ref{thm-B}.

\begin{thm}
\label{thm-D}
Let $n<k\le 2n-4$
so that $s = s(k,n) = [(2n-k)/2]\ge 2$. In this case the number of moduli in the classification of $\ell $-jets of generic tuples (\ref{tu}) goes to $\infty $ as $\ell \to \infty $.
A generic tuple (\ref{tu}) has in suitable coordinates normal form (\ref{nf-B}) with $h_1(u,v)\equiv \lambda _1$,
parameterized by $(s-1)$ functions of \ $2(k-n)$ variables. This normal form is asymptotically exact.
\end{thm}

In the beginning of the paper, in Theorem \ref{thm-moduli} we stated that in the case of dimensions $k,n$ in Theorem \ref{thm-D} the functional moduli
are $s-1$ functions of $2(k-n)$ variables. Theorem \ref{thm-D}  gives a precise meaning of what we mean by these words. A more detailed characterization of ``functional codimension" of orbits in classification problems of local analysis requires Poincare series of moduli numbers which was introduced by V. Arnol'd in \cite{Ar2}.

\bibliographystyle{amsalpha}

\end{document}